\documentclass[11pt]{amsart}

\newcommand{\pp}{\mathcal{P}}
\newcommand{\cc}{\mathcal{C}}
\newcommand{\bb}{\mathcal{B}}
\newcommand{\hh}{\mathcal{H}}
\usepackage{mystyle}

\usepackage{tcolorbox}
\usepackage{float}

\begin{document}

\title{Strong chromatic index of bipartite graphs}
%\footnote{Dedicated to Professor Sasha Kostochka on the occasion of his retirement.}
\date{}

\author{Yanli Hao, Tianchi Yang, and Xingxing Yu}
\address{School of Mathematics, Georgia Institute of Technology, Atlanta, GA 30332}
\footnote{XY was partially supported by NSF Grant DMS--2348702}
\email{yhao98@gatech.edu, tyang439@gatech.edu, yu@math.gatech.edu}

%\author{Author 2}
%\address{University 2}
%\email{author2@university2.edu}

\begin{abstract}
An edge-coloring of a graph $G$ is called a strong edge-coloring if all its color classes are induced matchings in $G$;  
the minimum number of colors required for such a coloring, denoted by $\chi_{s}'(G)$, is known as the strong chromatic index of $G$. For each vertex $v$ of a graph $G$, let $d_G(v)$ denote the degree of $v$ in $G$. Let $G$ be a bipartite graph with partite sets $A$ and $B$, and let $\Delta_A=\max\{d_G(a): a\in A\}$ and $\Delta_B=\max\{d_G(b): b\in B\}$. A conjecture of Brualdi and Quinn Massey asserts that
\( \chi_s'(G) \le \Delta_A \Delta_B\).  
In this paper, we show that  \(\chi_s'(G) \le 1.676\, \Delta_A \Delta_B\) provided that $\Delta_A$ and $\Delta_B$ are sufficiently large. 
\end{abstract}

\maketitle

\section{Introduction}

All graphs considered in this paper are finite and simple (i.e., without loops or multiple edges). Let $G$ be a graph $G$. We use $V(G)$ and $E(G)$ to denote the vertex set and edge set of $G$, respectively, and use $\Delta(G)$ to denote the maximum degree of $G$. For any set $S\subseteq V(G)$, we use $G[S]$ to denote the subgraph of $G$ induced by $S$. For any set $T\subseteq E(G)$, $G-T$ denotes the graph obtained from $G$ by deleting the edges in $T$.  For a vertex $x\in V(G)$, we let $N_G(x)$ denote the neighborhood of $x$ in $G$ (i.e., the set of vertices adjacent to $x$) and $E_G(x)$ denote the set of edges incident with $x$, and let $d_G(x):=|N_G(x)|=|E_G(x)|$.  
The distance between two distinct edges $e$ and $f$ of $G$, denoted by $d_G(e, f)$, is the length of a shortest path in $G$ connecting $e$ and $f$ but containing neither $e$ nor $f$. (We will study pairs $\{e,f\}$ with $d_G(e,f)\le 1$, as such $\{e,f\}$ cannot be contained in an induced matching.)  Furthermore, for $X,Y\subseteq V(G)$, let $E_G(X,Y)$ denote the set of all edges of $G$ incident with both a vertex in $X$ and a vertex in $Y$. When there is no danger of confusion, we drop the subscript $G$ in the above notations.

An edge-coloring of \( G \) is called a {\it strong edge-coloring} if, for every color class $M$, $M$ is an induced matching, that is, $E(G[V(M)]) = M$. The minimum number of colors required for such a coloring, denoted by \( \chi'_s(G) \), is called the {\it strong chromatic index} of \( G \).  Erd\H{o}s and Ne\v{s}etr\v{e}l~\cite{Erdos_Nesetril_1988}  conjectured in 1988 that
\(
\chi'_s(G) \le \tfrac{5}{4}\Delta(G)^2\), which would be best possible as demonstrated by the blow up of a 5-cycle.
 Andersen \cite{Andersen92} and independently Hor\'{a}k, Qing and Trotter \cite{HQT93} proved the conjecture for multigraphs of maximum degree at most 3.  
 (Kostochka {\it et al} \cite{KLRSWY16} showed that every planar multigraph with maximum degree at most 3 in fact has strong chromatic index at most 9, verifying a conjecture of Faudree {\it et al} in  \cite{FaudreeGyarfasSchelpTuza1990StrongChromaticIndex}.)
Molloy and Reed~\cite{MolloyReed1997} proved that \( \chi'_s(G) \le 1.998\,\Delta(G)^2 \) provided that $\Delta(G)$ is sufficiently large. Bruhn and Joos~\cite{BruhnJoos2018StrongerBoundStrongChromaticIndex} improved this bound to \( 1.93\,\Delta(G)^2 \) and commented that the method used in their work does not produce a bound better than $1.73\Delta(G)^2$. Subsequent improvements of the bound were made by Bonamy, Perrett, and Postle~\cite{BonamyPerrettPostle2022SparseNeighbourhoods} (to \( 1.835\,\Delta(G)^2 \))  and by Hurley, de Joannis de Verclos, and Kang~\cite{HurleyJoannisKang2022ImprovedProcedure} (to  \( 1.772\,\Delta(G)^2 \)). In his master's thesis, Davey \cite{davey2024local} further improved the bound to  \( 1.73\,\Delta(G)^2 \) for sufficiently large $\Delta(G)$, and also obtained good bounds on the strong chromatic index of bipartite graphs. (After we posted the first version of this paper on arXiv, we learned about Davey's thesis from Ross Kang, and that those results will be included  in a forthcoming paper by Davey, de Joannis de Verclos, Hurley, Kang, and Volec.) While bounding the strong chromatic index has proven to be challenging, related results have been established for the fractional strong chromatic index $\chi_{f,s}'(G)$ (which we do not formally define). For example, one can obtain strong bounds for $\chi_{f,s}'(G)$ by bounding the strong clique number (the clique number of $L(G)^2$), and the latter is studied by Cames van Batenburg in his Ph.D. thesis \cite{vanBatenurgThesis}.

There is also a bipartite version of this Erd\H{o}s-Ne\v{s}etr\v{e}l conjecture, due to Faudree, Gy\'arf\'as, Schelp, and Tuza~\cite{FaudreeGyarfasSchelpTuza1990StrongChromaticIndex},  which states that \( \chi'_s(G) \le \Delta(G)^2 \). 
Steger and Yu~\cite{StegerYu1993InducedMatchings} verified the Faudree--Gy\'arf\'as--Schelp--Tuza conjecture for the case $\Delta (G)=3$. 
Brualdi and Quinn Massey~\cite{BrualdiQuinnMassey1993} made the following stronger conjecture.

\begin{conjecture}[Brualdi--Quinn Massey Conjecture] \label{conj:BQ}
For any bipartite \( G  \) with partite sets $A$ and $B$, 
\(
\chi'_s(G) \le \Delta_A \Delta_B, 
\)
where $\Delta_A=\max\{d_G(a):a\in A\}$ and $\Delta_B=\max\{d_G(b): b\in B\}$. 
\end{conjecture}

Nakprasit~\cite{Nakprasit2008StrongChromaticIndexBipartite} verified the Brualdi--Quinn Massey Conjecture for \( \Delta_A = 2 \). Huang, Yu, and Zhou~\cite{HuangYuZhou2017StrongChromaticIndex3DeltaBipartite}, and independently, Bensmail, Lagoutte, and Valicov~\cite{BensmailLagoutteValicov2016StrongEdgeColoring3DeltaBipartite}, verified this conjecture for \( \Delta_A = 3 \).  Davey~\cite{davey2024local} showed that \(\chi_s'(G)\le 1.6632\Delta_A\Delta_B\) when $\Delta_B=p\Delta_A$ for $p\in\{0.1,0.2,\ldots,1\}$ and  \(\chi_s'(G)\le 1.6254\,\Delta(G)^2 \) for any bipartite graph with a large enough $\Delta(G)$.  We also mention that, Cames van Batenburg, Kang and Pirot \cite{van2020strong} used the fact that the strong clique number of such a bipartite graph is at most $\Delta_A\Delta_B$ to show that (1) $\chi_{f,s}'(G) \le 1.5\Delta(G)^2$ for any bipartite graph $G$, and (2) $\chi_{f,s}'(G) \le 1.625\Delta(G)^2$ for any triangle-free graph $G$.

In this paper, we prove the following, which only requires that $\Delta_A$ and $\Delta_B$ are large enough.  

\begin{theorem}\label{thm-main}
Let \( G\) be a bipartite graph with partite sets $A$ and $B$, let $\Delta_A=\max\{d_G(a):a\in A\}$ and $\Delta_B=\max\{d_G(b): b\in B\}$. Then  \(
\chi'_s(G) \le 1.676\,\Delta_A \Delta_B\) provided that  $\Delta_A$ and $\Delta_B$ are both sufficiently large.
\end{theorem}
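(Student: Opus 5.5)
The plan is to apply the standard ``sparse neighbourhood'' framework in the square of the line graph, $H:=L(G)^2$. A strong edge-coloring of $G$ is exactly a proper vertex coloring of $H$. Let $D:=\Delta_A\Delta_B$. For an edge $ab$ with $a\in A$, $b\in B$, the edges at distance at most $1$ from $ab$ are the edges meeting $N(a)\cup N(b)$. There are at most $\Delta_B(\Delta_A-1)+\Delta_A(\Delta_B-1)+1<2D$ of them, so $\Delta(H)<2D$.

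The black box I would invoke is a coloring result for graphs with sparse neighbourhoods, of the type used by Molloy--Reed, Bruhn--Joos and Hurley--de Joannis de Verclos--Kang. Stated in a local form, it says: if every vertex $v$ of $H$ has degree at most $\Delta$ and $H[N_H(v)]$ has at most $(1-\sigma)\binom{\Delta}{2}$ edges, then $\chi(H)\le (1-c(\sigma)+o(1))\Delta$. Here $c(\sigma)$ is an explicit function, and the refined version of Hurley et al.\ allows $c(\sigma)$ close to $\sigma/2$ or better for larger $\sigma$. Because the conclusion only needs $\Delta\to\infty$, requiring both $\Delta_A$ and $\Delta_B$ to be large suffices, via $D\ge \min(\Delta_A,\Delta_B)^2$. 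The imbalance between $\Delta_A$ and $\Delta_B$ is absorbed by normalising everything by $D$ rather than by $\Delta(G)^2$.

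The key combinatorial step is to bound the number of edges of $H$ inside $N_H(ab)$, i.e.\ the number of pairs $\{e,f\}$ of edges near $ab$ with $d_G(e,f)\le 1$. I would split $N_H(ab)=X_a\cup X_b$, where $X_a$ is the set of edges incident to $N(a)\subseteq B$ and $X_b$ is the set of edges incident to $N(b)\subseteq A$.
\begin{itemize}
\item Within $X_a$: two edges $b_1a_1$ and $b_2a_2$ with $b_i\in N(a)$ are adjacent in $H$ iff they share an endpoint or $a_1b_2$ or $a_2b_1$ is an edge. Using bipartiteness, I would count such pairs by choosing the edge $a_1b_2$ that certifies the adjacency. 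Each edge $xy$ ($x\in A$, $y\in N(a)$) certifies at most $\Delta_A\Delta_B$ pairs (one partner at $x$, one at $y$). Since there are at most $\Delta_A\Delta_B$ such edges, this gives about $D^2$ pairs inside $X_a$, against $\binom{|X_a|}{2}\approx D^2/2$. So this crude bound is useless, and the count must be done more carefully. The correct extremal target is that $X_a$ spans at most roughly $\tfrac12 D^2$ edges, with equality only when $G$ is close to complete bipartite, i.e.\ $X_a$ is essentially a clique.
\item Cross pairs between $X_a$ and $X_b$ are where the saving comes from. Since $G$ is bipartite, an edge incident with $N(a)$ and an edge incident with $N(b)$ are adjacent in $H$ only through a single connecting edge between $N(b)$ and $N(a)$. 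I would bound these cross pairs by at most about $\tfrac12\cdot D\cdot D$, up to constants to be optimised. This should show that $N_H(ab)$ spans at most about $\tfrac32 D^2$ edges out of $\binom{2D}{2}\approx 2D^2$, i.e.\ $\sigma\ge 1/4$.
\end{itemize}

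Plugging $\sigma=1/4$ directly into known $c(\sigma)$ would not give $1.676$, since $2(1-c)=1.676$ needs $c\approx0.162$. So the main obstacle is this final combination. Rather than using the uniform sparsity bound, I would use the local-degree refinement: vertices $ab$ with $|N_H(ab)|$ noticeably below $2D$ have slack already, while vertices with $|N_H(ab)|\approx 2D$ force $G$ near $a$ and $b$ to be nearly regular with large degrees. In that regime the $X_a$ and $X_b$ parts are nearly cliques and the cross density is nearly exactly $1/2$. I would then run the nibble or a Hurley et al.-style procedure, whose saving depends on the density profile (clique-like parts plus a half-density bipartite part), rather than on average sparsity alone. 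Optimising the parameters in that procedure numerically should yield the constant $1.676$. If needed, I would fall back to a case split according to whether $|N(a)\cup N(b)|$ structures are near-extremal, applying the cruder sparsity bound outside the extremal case.
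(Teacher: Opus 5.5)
\textbf{There is a genuine gap.} The whole argument rests on an upper bound for $m_e$, the number of edges of $L(G)^2$ inside the neighbourhood of $e=ab$, and you never prove one strong enough.

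\emph{Your sparsity bound is too weak.} Your $X_a/X_b$ split gives at best $m_e\lesssim\frac32D^2$, i.e.\ $\sigma=\frac14$. Even the $\frac12D^2$ bound on cross pairs is only asserted, not proved. As you note, $\sigma=\frac14$ yields only about $1.79\,\Delta_A\Delta_B$. The proposed rescue, a nibble ``whose saving depends on the density profile'' with parameters ``optimised numerically'', is not a stated theorem, and no computation is given that produces $1.676$. So the proof does not close.

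\emph{The heuristic behind the rescue is also wrong.} Write $B_1=N(a)$, $A_1=N(b)$, $A_2=A\setminus A_1$, $B_2=B\setminus B_1$, and $d_1(x)=|N(x)\cap B_1|$, $d_1(y)=|N(y)\cap A_1|$. If $|N_{L(G)^2}(ab)|\approx 2D$, then $|E(A_1,B_1)|=o(D)$. So a cross pair $\{xy,x'y'\}$ with $xy\in E(A_2,B_1)$ and $x'y'\in E(A_1,B_2)$ must essentially be certified by an edge $xy'\in E(A_2,B_2)$. Using $2d_1(x)d_1(y')\le D\big(d_1(x)^2/\Delta_A^2+d_1(y')^2/\Delta_B^2\big)$ and $d_1(x)(\Delta_A-d_1(x))\le\Delta_A^2/4$, such pairs number at most $\sum_{xy'}d_1(x)d_1(y')\le\frac14D^2$. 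Hence the cross density in that regime is at most about $\frac14$, not ``nearly exactly $\frac12$''.

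\textbf{What is missing is a sharper count of $m_e$.} The paper proves $m_e\le(1.174+o(1))\Delta_A^2\Delta_B^2\approx0.587\binom{\Delta(L(G)^2)}{2}$, i.e.\ $\sigma=0.413$. Fed into the ordinary, uniform Hurley--de Joannis de Verclos--Kang theorem, this gives $\epsilon\approx0.162$ and $2(1-\epsilon)\Delta_A\Delta_B=1.676\,\Delta_A\Delta_B$. No refinement of that theorem is needed.

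The paper's steps are as follows.
\begin{itemize}
\item It reduces to biregular $G$. Then $N^s_e=E(G)\setminus E(A_2,B_2)$ has $(1+\gamma)D$ edges, where $|E(A_1,B_1)|=(1-\gamma)D$.
\item It counts adjacent pairs via ordered paths of length $2$ and $3$ whose end edges lie in $N^s_e$, obtaining $2m_e\le|\mathcal P(H)|-|\mathcal B(H)|-4|\mathcal C(H)|+O(D(\Delta_A+\Delta_B))$. Here $|\mathcal P(H)|\le(2+2\gamma)D^2$ counts ordered $3$-paths starting with an edge of $N^s_e$. $\mathcal B(H)$ consists of those whose last edge lies in $E(A_2,B_2)$. $\mathcal C(H)$, the $4$-cycles avoiding $E(A_2,B_2)$, corrects the fourfold count of opposite edge pairs.
\item It lower-bounds $|\mathcal C(H)|$ by Jensen on codegrees in $A_1$ and $B_1$, and $|\mathcal B(H)|$ via $d_1(x)d_2(x)$. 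Both bounds are expressed in the moments $S_{A,i},S_{B,i}$ of $d_1$.
\item Cauchy--Schwarz $S_{A,2}^2\le S_{A,1}S_{A,3}$ and completing the square eliminate these moments, leaving a one-variable function with $U(\gamma)<2.348$ on $[0,1]$.
\end{itemize}
This captures a trade-off your proposal never quantifies. Concentrating the edges between $A_2$ and $B_1$ on few vertices creates many overcounted $4$-cycles. Spreading them out instead sends many paths out of $N^s_e$ through $E(A_2,B_2)$.
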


We note that it suffices to prove Theorem~\ref{thm-main} for {\it biregular} bipartite graphs, i.e., bipartite graphs in which all vertices in the same partite set have the same degree. This is because of the following reason: If $G$ is a bipartite graph with partite sets $A$ and $B$, and if $\Delta_A=\max\{d_G(a):a\in A\}$ and $\Delta_B=\max\{d_G(b): b\in B\}$, then $G$ is an induced subgraph of some biregular bipartite graph whose vertices have degrees $\Delta_A$ or $\Delta_B$.

The main step in our proof of Theorem~\ref{thm-main} is to solve an extremal problem on biregular bipartite graphs. To convert the original coloring problem to a new extremal problem, we use the idea of Molloy and Reed~\cite{MolloyReed1997} (also used in~\cite{BruhnJoos2018StrongerBoundStrongChromaticIndex,BonamyPerrettPostle2022SparseNeighbourhoods,HurleyJoannisKang2022ImprovedProcedure}) that considers the density of the neighborhood of vertices in the line graph. 
It is well known that a strong edge-coloring of $G$ is equivalent to a proper vertex coloring of $L(G)^2$, the square of the line graph of $G$. To apply the Molloy--Reed approach to bound the chromatic number of $L(G)^2$, it is essential to bound the number of edges in the subgraph of $L(G)^2$ induced by the neighborhood of each vertex. 
Specifically, for any edge $e\in E(G)$, let
$$N_e^s:=\Big\{f\in E(G): d_G(e,f)\le 1\Big\},$$ 
which is the neighborhood of $e$ in $L(G)^2$. We aim to bound the number of pairs of edges $\{e_1,e_2\}\subseteq N_e^s$ such that $e_1$ and $e_2$ are adjacent in $L(G)^2$, which is formally defined below  for each $e\in E(G)$: 

\begin{equation*}
m_e:=\left|\left\{\{e_1,e_2\}\in {N_e^s\choose 2}:  d_G(e_1,e_2)\le 1\right\}\right|.
\end{equation*}
Note that, for any distinct $e_1, e_2 \in N_e^s$, $d_G(e_1,e_2)\le 1$ (i.e., $e_1e_2\in E(L(G)^2)$) if and only if  $e_1$ and $e_2$ are the two end edges of a path of length 2 or 3 in $G$. 

The remainder of the paper is organized as follows.
In Section \ref{section2}, we convert the problem of bounding $m_e$ to an extremal problem over the family of biregular bipartite graphs by counting paths of length 2 or 3, and provide an upper bound on $m_e$ in terms of the number of such paths. In Section \ref{section3}, we optimize the bound on $m_e$ obtained from Section \ref{section2}.   The main result,  Theorem~\ref{thm-main}, is then proved in Section \ref{section4} by using the optimized bound and a recent result of Hurley, de Joannis de Verclos, and Kang \cite{HurleyJoannisKang2022ImprovedProcedure}. 

\section{Counting paths in biregular bipartite graphs}\label{section2}

The goal of this section is to convert the problem of bounding $m_e$ to an extremal problem on counting certain paths of length 2 or 3 in biregular bipartite graphs. To facilitate counting, we will consider {\it ordered paths} of lengths 2 or 3, which will be denoted as sequences of distinct vertices of length 3 or 4 in which consecutive vertices are adjacent. For example, the two sequences $abcd$ and $dcba$ are considered different ordered paths.

To describe and solve that counting problem, we need to fix some notation. Let $\mathcal{H}$ be the family of biregular bipartite graphs with partite sets $A$ and $B$ such that the vertices in $A$ have degree  $\Delta_A$ and the vertices in $B$ have degree $\Delta_B$. For each $H\in \hh$, consider partitions  $A=A_1 \cup A_2$ and $B=B_1 \cup B_2$, such that $A_1\cap A_2=\emptyset$, $B_1\cap B_2=\emptyset$, $|A_1| =\Delta_B$, and $|B_1| =\Delta_A$. With respect to this partition, we define the following sets:  
\begin{itemize} 
 
\item $\mathcal P(H)$ denotes the set of all ordered paths $abcd$ of length 3 in $H$ with $ab\notin  E_H(A_2,B_2)$.\medskip
\item $\mathcal B(H)$ denotes the set of all ordered paths $abcd$ in $\mathcal P(H)$ such that $cd\in E_H(A_2, B_2)$.\medskip
\item $\mathcal C(H)$ denotes the set of all 4-cycles in $H-E_H(A_2, B_2)$. 
\end{itemize}
Thus $\mathcal P(H)\backslash \mathcal B(H)$ is the set of all ordered paths $abcd$ in $H$ with $ab,cd\in  E(H)\setminus E_H(A_2,B_2)$. We now bound $m_e$ in terms of $|\pp(H)|$, $|\bb(H)|$, and $|\cc(H)|$, as established in the following lemma (see Figure 1 for an illustration).

    \begin{figure}[!htb]
\begin{center}
 \begin{tikzpicture}[
    % scale 控制整体大小，xscale 控制左右拉伸，yscale 控制上下压缩
    scale=0.6, xscale=1, yscale=0.8, transform shape,
    % 椭圆集合样式
    set_ellipse/.style={draw, ellipse, minimum width=4.5cm, minimum height=1.3cm, thick},
    % 顶点样式
    vertex/.style={circle, fill=black, inner sep=2pt},
    % 侧边 A, B 标签样式
    side_label/.style={font=\huge\bfseries\itshape}
]

    % --- Internal elements of X (Box removed) ---
    % Coordinates are kept at (0,0) as the center reference
    
    % Left side A, B labels
    \node[side_label, xshift=-3.5cm, yshift=1.5cm] at (0,0) {A};
    \node[side_label, xshift=-3.5cm, yshift=-1.5cm] at (0,0) {B};

    % Ellipses
    \node[set_ellipse] (XA) at (0, 1.5) {};
    \node[above=0.1cm of XA, font=\large] {$A_1$};
    \node[set_ellipse] (XB) at (0, -1.5) {};
    \node[below=0.1cm of XB, font=\large] {$B_1$};

    % Vertices and connections
    \node[vertex, label=above:{$u$}] (a) at (-1.6, 1.4) {};
    \node[vertex] (a2) at (0.4, 1.4) {};
    \node[vertex] (a3) at (1.8, 1.4) {};
    \node at (1.1, 1.45) {\dots\dots};

    \node[vertex, label=below:{$v$}] (b) at (-1.6, -1.4) {};
    \node[vertex] (b2) at (0.4, -1.4) {};
    \node[vertex] (b3) at (1.8, -1.4) {};
    \node at (1.1, -1.45) {\dots\dots};

    % Edges
    \draw[thick] (a) -- (b) node[midway, left, black] {$e$};
    \draw (a) -- (b2);
    \draw (a) -- (b3);
    \draw (b) -- (a2);
    \draw (b) -- (a3);

    % --- Elements for Y ---
    \node[set_ellipse] (YA) at (7.5, 1.5) {};
    \node[above=0.1cm of YA, font=\large] {$A_2$};
    \node[set_ellipse] (YB) at (7.5, -1.5) {};
    \node[below=0.1cm of YB, font=\large] {$B_2$};

\end{tikzpicture}
    \caption{\label{fig1} $A_1$, $A_2$, $B_1$, $B_2$}
\end{center}
\end{figure}\medskip

\begin{lemma} \label{lem me PBC}
Let  $H\in \hh$ be a biregular bipartite graph with partite sets $A$ and $B$, and let $\Delta_A, \Delta_B$ denote the degrees of vertices in $A,B$, respectively. For an edge $e = uv \in E(H)$ with $u\in A$ and $v\in B$, let $A_1 = N_H(v)$ and $B_1 = N_H(u)$, and let $A_2 = A \setminus A_1$ and $B_2 = B \setminus B_1$.
Let $N_e^s:=\left\{f\in E(G): d_G(e,f)\le 1\right\}$ and $m_e:=\left|\left\{\{e_1,e_2\}\in {N_e^s\choose 2}:  d_G(e_1,e_2)\le 1\right\}\right|$. Then   $$  2m_e\le |\pp(H)|-|\bb(H)|-4|\cc(H)|+ 2(\Delta_A^2\Delta_B+\Delta_A\Delta_B^2).$$
\end{lemma}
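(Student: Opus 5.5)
The plan is to split the pairs counted by $m_e$ into those at distance $0$ and those at distance exactly $1$. The $(A_2,B_2)$-free ordered $3$-paths will cover the second kind, and a direct degree count will cover the first.

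\textbf{Identify $N_e^s$.} Since $H$ is bipartite, an edge $f$ satisfies $d_H(e,f)\le 1$ exactly when $f$ has an endpoint in $N_H(u)\cup N_H(v)=B_1\cup A_1$. Hence
\[
N_e^s\cup\{e\}=E(H)\setminus E_H(A_2,B_2).
\]
A pair $\{e_1,e_2\}\subseteq N_e^s$ with $d_H(e_1,e_2)\le 1$ is of one of two kinds:
\begin{itemize}
\item[(i)] $e_1$ and $e_2$ share a vertex, so they are the two edges of a path of length~2;
\item[(ii)] $e_1$ and $e_2$ are disjoint and joined by at least one edge, so they are the end edges of a path of length~3.
\end{itemize}
Let $m_e^{(0)}$ and $m_e^{(1)}$ be the numbers of pairs of the two kinds, so that $m_e=m_e^{(0)}+m_e^{(1)}$.

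\textbf{Pairs of type (ii).} Consider a pair $\{e_1,e_2\}$ of type (ii), and let $k\ge 1$ be the number of edges joining $e_1$ and $e_2$. Each such joining edge produces two ordered paths $abcd$, one for each orientation, with $ab,cd\notin E_H(A_2,B_2)$. These ordered paths lie in $\pp(H)\setminus\bb(H)$. Conversely, every path in $\pp(H)\setminus\bb(H)$ arises in this way, except those whose end edges include $e$, and those only make the count larger. Therefore
\[
|\pp(H)|-|\bb(H)|\ \ge\ \sum_{\text{type (ii) pairs}} 2k.
\]
Since $2m_e^{(1)}=\sum 2$, the excess is $\sum 2(k-1)$. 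When $k\ge2$, the pair $e_1,e_2$ together with two joining edges forms a $4$-cycle. Take a $4$-cycle of $\cc(H)$, all of whose edges avoid $E_H(A_2,B_2)$. Each of its two pairs of opposite edges contributes at least $2$ to the excess, so each such cycle contributes at least $4$. A fixed pair with $k$ joining edges lies in $\binom k2$ such cycles. I will therefore need the elementary inequality $2(k-1)\ge 4\cdot\frac{1}{2}\binom{k}{2}$, or a more careful attribution. Since $k\le \min(\Delta_A,\Delta_B)$, this naive inequality fails for large $k$. The fix I would try first is to attribute each cycle's charge of $4$ as $2$ to each of its two opposite pairs, and then recount the excess of a pair directly as $k(k-1)$, using ordered paths through distinct middle edges. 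I expect this to give $2m_e^{(1)}\le |\pp(H)|-|\bb(H)|-4|\cc(H)|$, but it needs to be checked.

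\textbf{Pairs of type (i).} These should be bounded by $\Delta_A^2\Delta_B+\Delta_A\Delta_B^2$ after doubling, by summing ordered pairs of edges over a common vertex $x$. For $x\in A_1$ there are $\Delta_B$ vertices, each giving at most $\Delta_A(\Delta_A-1)$ ordered pairs. Symmetrically, $B_1$ contributes at most $\Delta_A\Delta_B(\Delta_B-1)$.

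\textbf{Main obstacle.} The hard step is the vertices $x\in A_2\cup B_2$. At such an $x$ both edges must go into $B_1$ (respectively $A_1$), and the naive count $\sum_{x\in A_2} d_{B_1}(x)(\Delta_A-1)\le \Delta_A^2\Delta_B$ loses a factor of $2$ overall. To close this gap I would trade slack between the two parts. Concretely, I would combine the $-\Delta_A-\Delta_B$ savings at $A_1\cup B_1$ with the slack in the type (ii) count: paths whose end edge is $e$ itself, and the paths $ubxb'$ with $b,b'\in B_1$ and $x\in A_2$. I expect the latter to be counted in $\pp(H)\setminus\bb(H)$ with multiplicity exceeding what $m_e^{(1)}$ needs. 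The final inequality then follows by adding the bounds for $2m_e^{(0)}$ and $2m_e^{(1)}$, so this balancing of the two counts is the crux of the proof.
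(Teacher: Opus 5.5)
Your outline matches the paper's proof. Type (i) pairs are the ordered $2$-paths and type (ii) pairs are the ordered $3$-paths in $\mathcal P(H)\setminus\mathcal B(H)$. The cycles of $\mathcal C(H)$ absorb the overcount, and the $2$-paths are bounded by where the middle vertex lies. As written, however, you close neither half. In both cases the obstruction you flag comes from a mistake, not a real difficulty.

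\textbf{Type (ii).} The worry that $k$ can be as large as $\min(\Delta_A,\Delta_B)$ is unfounded. If $e_1=a_1b_1$ and $e_2=a_2b_2$ are disjoint with $a_i\in A$ and $b_i\in B$, the only possible joining edges are $a_1b_2$ and $a_2b_1$, since $H$ is bipartite. So $k\in\{1,2\}$, and even in a general graph $k\le 4$. A pair therefore yields $2$ ordered paths if $k=1$ and $4$ if $k=2$. In the latter case its four vertices span exactly one $4$-cycle. Each cycle of $\mathcal C(H)$ supplies two such pairs (its two perfect matchings), each with excess $2$, and distinct cycles give distinct pairs. Hence $|\mathcal P(H)|-|\mathcal B(H)|\ge 2m_e^{(1)}+4|\mathcal C(H)|$ follows at once. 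Your inequality $2(k-1)\ge 2\binom k2$ does hold for $k\le 2$, and the ``$k(k-1)$'' recount is unnecessary.

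\textbf{Type (i).} You have misread the additive term. The lemma allows $2(\Delta_A^2\Delta_B+\Delta_A\Delta_B^2)$, not $\Delta_A^2\Delta_B+\Delta_A\Delta_B^2$. Your own naive counts by position of the middle vertex are:
\begin{itemize}
\item at most $\Delta_A^2\Delta_B$ in $A_1$ and $\Delta_A\Delta_B^2$ in $B_1$;
\item $(\Delta_A-1)|E_H(A_2,B_1)|\le\Delta_A^2\Delta_B$ in $A_2$, and symmetrically at most $\Delta_A\Delta_B^2$ in $B_2$.
\end{itemize}
These sum to exactly the allowed amount, which is all the paper does. So there is no ``main obstacle,'' and you should drop the balancing step.

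Moreover, the halved bound on $2m_e^{(0)}$ you aim for is false in general. The $A_1\cup B_1$ part alone is $\Delta_A^2\Delta_B+\Delta_A\Delta_B^2-O(\Delta_A\Delta_B)$. The $A_2$ part is of order $\Delta_A^2\Delta_B$ whenever order-$\Delta_B$ many vertices of $A_2$ each have order-$\Delta_A$ many neighbors in $B_1$. The slack you name cannot make up the difference:
\begin{itemize}
\item paths with end edge $e$ number only $O(\Delta_A\Delta_B)$;
\item each path $ubxb'$ lies on the $4$-cycle $ubxb'$, which is in $\mathcal C(H)$ because $u\in A_1$ and $b,b'\in B_1$, so its extra multiplicity is exactly what the $-4|\mathcal C(H)|$ term already uses.
\end{itemize}
With $k\le 2$ and the correct target, your outline becomes the paper's argument.
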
 

\begin{proof}By definition, $N_e^s=E(H)\setminus E_H(A_2,B_2)$. For any two distinct edges $e_1, e_2 \in N_e^s$, they are adjacent in $L(H)^2$ (i.e., $d_H(e_1, e_2) \le 1$) if and only if  $e_1$ and $e_2$ are the two end edges of a path of length 2 or 3 in $H$.  To enumerate such $\{e_1,e_2\}$, we consider the following sets of ordered paths:
$$ \mathcal{P}_2 = \{abc : ab, bc \in N_e^s\} \quad \text{and} \quad \mathcal{P}_3 = \{abcd : ab, cd \in N_e^s\}. $$
 Each path in $\pp_2\cup \pp_3$ corresponds to an adjacent pair of edges in  $L(H)^2$. Specifically, any unordered pair $\{e_1, e_2\}$ with $d_H(e_1, e_2) \leq 1$ corresponds to exactly two ordered paths in $\pp_2\cup \pp_3$, except when $\{e_1,e_2\}$ is a matching contained in some $4$-cycle $C\in \cc(H)$; in this exceptional case, the pair $\{e_1, e_2\}$ is contained in four distinct ordered paths of length 3 in $\mathcal{P}_3$.
 %(which contains exactly 4 ordered paths between $e_1$ and $e_2$ in $\pp_3$). 
 Since each 4-cycle in $\cc(H)$ has two such matchings, we have 
%\begin{equation*}
%  2m_e = \left| \{ abcd : ab, cd \in N_e^s %\in E^1(H) 
%  \} \right| + \left| \{ abc : ab, bc \in N_e^s %E^1(H) 
%  \} \right|-4|\mathcal{C}(H)|.
%\end{equation*}
\begin{equation*}
  2m_e \le \left| \pp_3 \right| + \left| \pp_2 \right|-4|\mathcal{C}(H)|.
\end{equation*}

By definition, $\left|\pp_3\right|= |\mathcal P(H)| - |\mathcal B(H)|$. We now bound $|\pp_2|$
%next show that $\left| \pp_2\right|\le  2(\Delta_A+\Delta_B)\Delta_A\Delta_B$ 
by considering the location of the middle  vertex $b$ of the ordered paths $abc\in \pp_2$. 
\begin{itemize}
    \item First, the number of paths $abc$ in $\pp_2$ with $b\in A_1$ is at most $\Delta_B\Delta_A^2$, since when $b\in A_1$, there are $|A_1|=\Delta_B$ choices for $b$ and, for each choice of $b$, there are at most $2\binom{d_H(b)}{2}\le \Delta_A^2$ choices for $a$ and $c$.

    \item The number of paths $abc$ in $\pp_2$ with $b\in B_1$ is at most $\Delta_A\Delta_B^2$, since when $b\in B_1$, there are $|B_1|=\Delta_A$ choices for $b$ and,  
    for each choice of $b$, there are at most $2\binom{d_H(b)}{2}\le \Delta_B^2$ choices for $a$ and $c$.

    \item The number of paths $abc$ in $\pp_2$ with $b\in A_2$ is at most $\Delta_A^2\Delta_B$; since when $b\in A_2$ we have $a,c\in B_1$ and, hence, there are at most $2\binom{|B_1|}{2}\le \Delta_A^2$ choices for $a$ and $c$ and, for each choice of $(a,c)$  there are at most $\Delta_B$ choices for $b$.

    \item The number of paths $abc$ in $\pp_2$ with $b\in B_2$ is at most $\Delta_B^2\Delta_A$; since when $b\in B_2$ we have $a,c\in A_1$ and, hence, there are at most $2\binom{|A_1|}{2}\le \Delta_B^2$ choices for $a$ and $c$ and, for each choice of $(a,c)$ there are at most $\Delta_A$ choices for $b$.
\end{itemize} 
Therefore, $|\mathcal{P}_2|\le 2(\Delta_A^2 \Delta_B + \Delta_A\Delta_B^2)$. Hence, the assertion of the lemma holds.  
\end{proof}

The inequality established in Lemma~\ref{lem me PBC} effectively reduces the problem of bounding $m_e$, the number of edges in the neighborhood of a vertex in $L(G)^2$, to an extremal problem over the family $\hh$ of biregular bipartite graphs. Accordingly, we now proceed to bound the quantity $|\mathcal P(H)| - |\mathcal B(H)| - 4|\mathcal C(H)|$ for $H\in \hh$. 
\begin{comment}
Let $\mathcal{H}$ be the family of biregular graphs $H=(A,B;E)$ with $A=U \cup A_2$ and $B=V \cup B_2$, where $U\cap X=\emptyset$, $V\cap Y=\emptyset$, and $|U| =\Delta_B, |V| =\Delta_A$. 
For each $H\in \mathcal H$, let $\mathcal C(H)$ denote the set of all 4-cycles in $H[U\cup B_2]$ and $H[V\cup A_2]$.
We also consider ordered paths in $H$, which are denoted as sequences of distinct vertices (such as $abcd$) in which every two consecutive vertices form an edge.  Note that $abcd$ and $dcba$ are distinct ordered paths. 
We use $\mathcal P(H)$ to denote the set of all ordered paths $abcd$ in $H$ with $ab\notin  E_H(A_2,B_2)$, and use $\mathcal B(H)$ to denote the set of all ordered paths $abcd$ in $\mathcal P(H)$ such that $cd\in E_H(X, Y)$. Then $\mathcal P(H)\backslash \mathcal B(H)$ is the set of all ordered paths $abcd$ in $H$ with $ab,cd\notin  E_H(A_2,B_2)$.

We now proceed to bound  $|\mathcal P(H)| - |\mathcal B(H)| - 4|\mathcal C(H)|$ for $H\in \hh$. 
\end{comment}
%Assume for a contradiction that  $H=H(U\cup A_2,V\cup B_2)$ is a graph in $\hh$ with $ |\pp(H)|-|\bb(H)|-4|\cc(H)|\ge  \frac{9}{4}\Delta^4+ o(\Delta^4)$. 

In the remainder of this section, we fix the following notation. Let $H\in \hh$ be a biregular bipartite graph with partite sets $A$ and $B$ such that all vertices in $A$ have degree  $\Delta_A$ and all vertices in $B$ have degree $\Delta_B$. Let $A=A_1 \cup A_2$ and $B=B_1 \cup B_2$, such that $A_1\cap A_2=\emptyset$, $B_1\cap B_2=\emptyset$,  $|A_1| =\Delta_B$, and $|B_1| =\Delta_A$. Define $\gamma\in[0,1]$ by letting $|E_H(A_1,B_1)|=(1-\gamma)\Delta_A\Delta_B$.    We will bound $|\pp(H)|$, $|\cc(H)|$, $|\bb(H)|$ separately. First, we bound $|\pp(H)|$. 
  
\begin{lemma}\label{p(H)}
    $|\pp(H)|\le (2+2\gamma)\Delta_A^2\Delta_B^2 +o(\Delta_A^2\Delta_B^2)$.
\end{lemma}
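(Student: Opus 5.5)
The plan is a direct count. Every ordered path $abcd\in\pp(H)$ is determined by three choices: its first edge $ab$ taken with an orientation, then the vertex $c$, then the vertex $d$. So I would bound the number of admissible oriented first edges and, separately, the number of ways to extend each one.

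First I would count the edges of $H$ not in $E_H(A_2,B_2)$. Such an edge has an end in $A_1$ or an end in $B_1$. By biregularity, the edges incident with $A_1$ number exactly $|A_1|\Delta_A=\Delta_B\Delta_A$. Likewise, the edges incident with $B_1$ number $|B_1|\Delta_B=\Delta_A\Delta_B$. Edges counted twice are exactly those in $E_H(A_1,B_1)$, of which there are $(1-\gamma)\Delta_A\Delta_B$ by the definition of $\gamma$. By inclusion--exclusion,
\[
|E(H)\setminus E_H(A_2,B_2)| = 2\Delta_A\Delta_B-(1-\gamma)\Delta_A\Delta_B=(1+\gamma)\Delta_A\Delta_B .
\]
Each such edge gives two ordered pairs $(a,b)$, namely $a\in A$ or $a\in B$. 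Hence there are $2(1+\gamma)\Delta_A\Delta_B$ choices for the oriented first edge $ab$.

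Next, with $ab$ fixed, $c$ must be a neighbor of $b$ other than $a$, and $d$ a neighbor of $c$ other than $b$. Since $H$ is bipartite, $b$ and $c$ lie in different partite sets, so one has degree $\Delta_A$ and the other $\Delta_B$. The number of extensions is therefore at most $(\Delta_A-1)(\Delta_B-1)\le \Delta_A\Delta_B$, whichever orientation is chosen. Distinctness of all four vertices is not needed for an upper bound.

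Multiplying the two counts gives $|\pp(H)|\le 2(1+\gamma)\Delta_A^2\Delta_B^2$, which is even stronger than the claimed bound since no error term is needed. I do not expect a real obstacle. The only points needing care are the inclusion--exclusion count, which uses $|A_1|=\Delta_B$ and $|B_1|=\Delta_A$ together with biregularity, and the observation that the extension bound $\Delta_A\Delta_B$ holds for both orientations of $ab$.
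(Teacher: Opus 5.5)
Your proposal is correct and matches the paper's proof step for step. You use the same inclusion--exclusion count $|E(H)\setminus E_H(A_2,B_2)|=(1+\gamma)\Delta_A\Delta_B$, doubled for orientation, times the extension bound $(\Delta_A-1)(\Delta_B-1)$, which holds because $b$ and $c$ lie in opposite partite sets. Your observation that this gives $|\pp(H)|\le 2(1+\gamma)\Delta_A^2\Delta_B^2$ with no error term is also right, since the lower-order correction the paper writes as $O(\Delta_A^2\Delta_B+\Delta_A\Delta_B^2)$ is in fact nonpositive.
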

\begin{proof}
    To form an ordered path $abcd\in \pp(H)$, we recall that $ab$ corresponds to an edge in $E(H)\setminus E_H(A_2,B_2)$; 
    so there are exactly $2|E(H)\setminus E_H(A_2,B_2)|$ choices for $ab$. Note that 
    \begin{align*}
        |E(H)\setminus E_H(A_2,B_2)|& =|E_H(A_1,B_2)|+|E_H(A_2,B_1)|+|E_H(A_1,B_1)|\\
        &=\sum_{v\in A_1}d_H(v)+\sum_{v\in B_1}d_H(v)-|E_H(A_1,B_1)|\\
        &=|A_1|\Delta_A+|B_1|\Delta_B-|E_H(A_1,B_1)|\\
        &=(1+\gamma)\Delta_A\Delta_B. 
        \end{align*}
    
    Now, fix a choice of the sequence $ab$. Since $abcd$ is a path, the number of choices for $c$ is at most $d_H(b)-1$, and the number of choices for $d$ is at most $d_H(c)-1$ for each choice of $c$. Since $H$ is bipartite, it follows that $a$ and $c$ must belong to the same partite set, and similarly $b$ and $d$ must belong to the same partite set. Given that one of $d_H(b)$ and $d_H(c)$  is bounded by $\Delta_A$ and the other by $\Delta_B$, there are at most $(\Delta_A-1)(\Delta_B-1)$ ways to extend $ab$ to the ordered path $abcd$.
    
   Therefore,
    $|\pp(H)|\le 2(1+\gamma)\Delta_A\Delta_B(\Delta_A-1)(\Delta_B-1)=(2+2\gamma)\Delta_A^2\Delta_B^2+O(\Delta_A^2\Delta_B+\Delta_A\Delta_B^2).$
\end{proof}

To bound $|\cc(H)|$ and $|\bb(H)|$, it is convenient to introduce the following notation. For $i\in [2]$,  let $d_i(x)=|N(x)\cap B_i|$ for $x\in A_2$, and $d_i(y)=|N(y)\cap A_i|$ for $y\in B_2$; so $d_1(x)+d_2(x)=\Delta_A$ and $d_1(y)+d_2(y)=\Delta_B$. 
For $i\in [3]$, let \[ S_{A,i}:=\sum_{x\in A_2}d_1(x)^i \qquad\text{and}\qquad S_{B,i}:=\sum_{y\in B_2}d_1(y)^i. \]

By double counting the edges between $A_2$ and $B_1$,  we have  
\begin{equation}\label{equ: Sx1}
    S_{A,1} =\sum_{x\in A_2}d_1(x)=|E_H(A_2,B_1)|=|B_1|\Delta_B-|E_H(A_1,B_1)|=\gamma \Delta_A\Delta_B
\end{equation}   
and by double counting the edges between $A_1$ and $B_2$, we have
\begin{equation}\label{equ: Sx2}
S_{B,1} =\sum_{y\in B_2}d_1(y)=|E_H(A_1,B_2)|=|A_1|\Delta_A-|E_H(A_1,B_1)|=\gamma \Delta_A\Delta_B.
\end{equation}

We now bound $|\cc(H)|$ from below.  

\begin{lemma}\label{clm C(H)}
\[
 |C(H)| \ge 
 \frac{ 1+o(1) }{4} \left(\frac{S_{A,2}^2}{\Delta_A^2} + \frac{S_{B,2}^2}{\Delta_B^2} + (1-\gamma)^2\Delta_B S_{A,2} + (1-\gamma)^2\Delta_A S_{B,2} + (1-\gamma)^4\Delta_A^2\Delta_B^2   \right)+o(\Delta_A^2\Delta_B^2) . \]
\end{lemma}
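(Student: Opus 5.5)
The plan is to count the 4-cycles of $H':=H-E_H(A_2,B_2)$ in two ways and average. Every 4-cycle in the bipartite graph $H'$ has exactly two vertices in $B$ and two in $A$. So
\[|\cc(H)|=\sum_{\{b,b'\}\subseteq B}\binom{c(b,b')}{2}=\sum_{\{a,a'\}\subseteq A}\binom{c(a,a')}{2},\]
where $c(\cdot,\cdot)$ is the codegree in $H'$. I will bound each of the two sums from below and then take their average. The lower bound for a single sum will not be symmetric in the cross terms, but the average will be exactly the claimed expression.

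\textbf{Pairs in $B$.} Restrict to pairs $\{b,b'\}\subseteq B_1$ and pairs $\{y,y'\}\subseteq B_2$; these two families are disjoint, and all other terms are nonnegative and can be dropped.

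For $\{b,b'\}\subseteq B_1$, double count cherries centred in $A$:
\[T_1:=\sum_{\{b,b'\}\subseteq B_1}c(b,b')=\sum_{a\in A_1}\binom{d_{B_1}(a)}{2}+\sum_{x\in A_2}\binom{d_1(x)}{2}.\]
Since $\sum_{a\in A_1}d_{B_1}(a)=|E_H(A_1,B_1)|=(1-\gamma)\Delta_A\Delta_B$ and $|A_1|=\Delta_B$, convexity gives $\sum_{a\in A_1}\binom{d_{B_1}(a)}{2}\ge \tfrac12(1-\gamma)^2\Delta_A^2\Delta_B-O(\Delta_A\Delta_B)$. The second sum equals $\tfrac12 S_{A,2}-O(\Delta_A\Delta_B)$, using \eqref{equ: Sx1}. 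Jensen over the $N=\binom{\Delta_A}{2}$ pairs in $B_1$ then gives
\[\sum\binom{c}{2}\ge \frac{T_1^2}{2N}-\frac{T_1}{2}.\]
Here $T_1=O(\Delta_A^2\Delta_B)$, so the correction $T_1/2$ is $o(\Delta_A^2\Delta_B^2)$. Expanding $T_1^2/\Delta_A^2$ yields
\[\tfrac14(1-\gamma)^4\Delta_A^2\Delta_B^2+\tfrac12(1-\gamma)^2\Delta_B S_{A,2}+\frac{S_{A,2}^2}{4\Delta_A^2},\]
up to a factor $1+o(1)$ and additive $o(\Delta_A^2\Delta_B^2)$.

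For $\{y,y'\}\subseteq B_2$, common neighbours in $H'$ lie only in $A_1$, and $|A_1|=\Delta_B$. The same argument with $\sum_{y}\binom{d_1(y)}{2}\approx \tfrac12 S_{B,2}$ and $\binom{\Delta_B}{2}$ pairs of $A_1$ gives
\[\sum_{\{y,y'\}\subseteq B_2}\binom{c(y,y')}{2}\ge \frac{S_{B,2}^2}{4\Delta_B^2}(1+o(1))-o(\Delta_A^2\Delta_B^2).\]
Here I apply Jensen in the dual form: count 4-cycles through pairs $\{a,a'\}\subseteq A_1$ with common neighbours in $B_2$, which is the same set of cycles. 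This is a small point to get right. The cleanest route is to note that cycles with both $B$-vertices in $B_2$ are exactly the cycles with both $A$-vertices in $A_1$ and both $B$-vertices in $B_2$. Their number is $\sum_{\{a,a'\}\subseteq A_1}\binom{c_{B_2}(a,a')}{2}$, with $\sum_{\{a,a'\}\subseteq A_1} c_{B_2}(a,a')=\sum_{y\in B_2}\binom{d_1(y)}{2}$, and Jensen applies over the $\binom{\Delta_B}{2}$ pairs.

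\textbf{Pairs in $A$.} The symmetric computation, using pairs in $A_1$ and pairs in $A_2$, gives
\[\tfrac14(1-\gamma)^4\Delta_A^2\Delta_B^2+\tfrac12(1-\gamma)^2\Delta_A S_{B,2}+\frac{S_{B,2}^2}{4\Delta_B^2}+\frac{S_{A,2}^2}{4\Delta_A^2}.\]
Averaging the two lower bounds produces exactly the claimed coefficients: $\tfrac14$ on each of the five terms.

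The main obstacle is bookkeeping rather than ideas. One must ensure that the pair families used within a single count are disjoint, so that no cycle is counted twice. One must also check that all the $\binom{c}{2}$ versus $c^2/2$ and $\binom{n}{2}$ versus $n^2/2$ discrepancies are absorbed into the factor $1+o(1)$ or into $o(\Delta_A^2\Delta_B^2)$. This holds because every $S_{\cdot,2}$ is $O(\Delta_A^2\Delta_B)$ or $O(\Delta_A\Delta_B^2)$, and $\Delta_A,\Delta_B\to\infty$.
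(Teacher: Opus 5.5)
Your proposal is correct and follows the paper's proof essentially step for step. Your ``pairs in $B$'' count is the paper's bound $|\mathcal{C}(H)|\ge|\mathcal{C}_{A,B_1}|+|\mathcal{C}_{A_1,B_2}|$, and your ``pairs in $A$'' count is its bound $|\mathcal{C}(H)|\ge|\mathcal{C}_{A_1,B}|+|\mathcal{C}_{A_2,B_1}|$. The ingredients are the same: Jensen over the pairs in the size-$\Delta$ sets $A_1$ or $B_1$, including the dual switch you rightly flag for the cycles through $B_2$ or $A_2$; convexity applied to the degrees in $E_H(A_1,B_1)$; and averaging the two lower bounds.
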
 
\begin{proof} 
Let $\cc_{A_1,B}$ denote the set of 4-cycles contained in $H[A_1\cup B]$, and $\cc_{A_2,B_1}$ denote the set of 4-cycles  contained in $H[A_2\cup B_1]$. Then $\cc_{A_1,B}\cup \cc_{A_2,B_1}\subseteq \cc(H)$. Since $E_H(A_1,B)\cap E_H(A_2,B_1)=\emptyset$, we have $|\mathcal{C}(H)| \ge |\mathcal{C}_{A_1, B}| + |\mathcal{C}_{A_2, B_1}|$.  We proceed to bound $|\cc_{A_1,B}|$ and $|\cc_{A_2,B_1}|$ from below.

Observe that 
   \begin{align*}
   |\cc_{A_1,B}| &= \sum_{\{u_1, u_2\} \subseteq A_1} \binom{|N_H(u_1)\cap N_H(u_2)|}{2}\\
   &\ge \binom{\Delta_B}{2} \binom{ 
   \frac{1}{\binom{\Delta_B}{2}} \sum_{\{u_1,u_2\} \subseteq A_1} |N_H(u_1)\cap N_H(u_2)| }{2} \quad(\mbox{by Jensen's inequality as $|A_1|=\Delta_B$})\\
   &= \binom{\Delta_B}{2} \binom{ 
   \frac{1}{\binom{\Delta_B}{2}} \sum_{b \in B} \binom{|N_H(b)\cap A_1|}{2}}{2} \quad (\mbox{by double counting}).\\
   &=\frac{1+o(1)}{\Delta_B^2} \left(\sum_{b \in B} \binom{|N(b)\cap A_1|}{2} \right)^2 +o(\Delta_A^2\Delta_B^2)  .
   \end{align*}
   In the last equality, we used that
$\binom{X}{2}=\frac12(1+o(1))X^2$
if \(X\to\infty\), while otherwise the contribution is absorbed into the \(o(\Delta_A^2\Delta_B^2)\) error term.
 Note that    
 \begin{align*}
    \sum_{b \in B} \binom{|N(b)\cap A_1|}{2} &= \sum_{y\in B_2} \binom{d_1(y)}{2} + \sum_{v \in B_1} \binom{|N_H(v)\cap A_1|}{2}\\
        &=\frac{1}{2}\sum_{y\in B_2} d_1(y)^2 + \frac{1}{2}\sum_{v \in B_1} |N_H(v)\cap A_1|^2 -\frac{1}{2}\left(\sum_{y\in B_2} d_1(y)+\sum_{v \in B_1} |N_H(v)\cap A_1|\right)\\
        &\ge \frac{1}{2}\left(\sum_{y\in B_2} d_1(y)^2 + (1-\gamma)^2 \Delta_A\Delta_B^2 -\Delta_A\Delta_B\right)   \\ 
        &=\frac{1}{2}\left(S_{B,2}  + (1-\gamma)^2 \Delta_A\Delta_B^2 -\Delta_A\Delta_B\right),
    \end{align*}
    where the inequality holds because $\sum_{y\in B_2}d_1(y)=\gamma \Delta_A\Delta_B$ (by \eqref{equ: Sx1}),  $\sum_{v \in B_1} |N_H(v)\cap A_1| = |E_H(A_1,B_1)| = (1-\gamma) \Delta_A\Delta_B$, and $\sum_{v \in B_1} |N_H(v)\cap A_1|^2 \ge |B_1|\left(\frac{\sum_{v \in B_1} |N_H(v)\cap A_1|}{|B_1|}\right)^2= (1-\gamma)^2 \Delta_A\Delta_B^2$ (by Cauchy-Schwarz) as $|B_1|=\Delta_A$. 
    Hence
\begin{align*}
|\mathcal{C}_{A_1,B}|  \ge \frac{1+o(1)}{4\Delta_B^2}  \Big( S_{B,2} + (1-\gamma)^2\Delta_A\Delta_B^2 \Big)^2  +o(\Delta_A^2\Delta_B^2) 
 \label{eq4.5}
\end{align*}

 Next we bound $|\cc_{A_2,B_1}|$.  Observe that 
 \begin{align*}
   |\cc_{A_2,B_1}| &= \sum_{\{v_1, v_2\} \subseteq B_1} \binom{|N_H(v_1)\cap N_H(v_2)\cap A_2|}{2} \\
   &\ge \binom{\Delta_A}{2} \binom{ \frac{1}{\binom{\Delta_A}{2}} \sum_{\{v_1,v_2\}\subseteq B_1} \binom{|N_H(v_1)\cap N_H(v_2)\cap A_2|}{2} }{2} \quad (\mbox{by Jensen's inequality as $|B_1|=\Delta_A$})\\
   &=        
         \binom{\Delta_A}{2} \binom{ \frac{1}{\binom{\Delta_A}{2}} \sum_{x\in A_2} \binom{|N_H(x)\cap B_1|}{2} }{2} \quad (\mbox{by double counting})   \\
    &=   \binom{\Delta_A}{2} \binom{ \frac{1}{\binom{\Delta_A}{2}} \sum_{x\in A_2} \binom{d_1(x)}{2} }{2}  =\frac{1+o(1)}{\Delta_A^2} \Bigg( \sum_{x\in A_2} \binom{d_1(x)}{2}   \Bigg)^2 +o(\Delta_A^2\Delta_B^2)\\
    &= \frac{1+o(1)}{4\Delta_A^2} S_{A,2}^2 +o(\Delta_A^2\Delta_B^2) . 
     \end{align*}

Thus, we have 
\begin{align*}
    |\cc(H)|  \ge |\mathcal{C}_{A_1,B}|+ |\cc_{A_2,B_1}| 
    \ge \frac{ 1+o(1) }{4} \left(\frac{S_{A,2}^2}{\Delta_A^2} + \frac{ \left( S_{B,2} + (1-\gamma)^2\Delta_A\Delta_B^2 \right)^2 }{\Delta_B^2} \right)  +o(\Delta_A^2\Delta_B^2) 
\end{align*}

Similarly,  let $\cc_{A,B_1}$ denote the set of 4-cycles contained in $H[A\cup B_1]$, and $\cc_{A_1,B_2}$ denote the set of 4-cycles  contained in $H[A_1\cup B_2]$. We also have $|\cc(H)|\ge |\cc_{A,B_1}|+|\cc_{A_1,B_2}|$, 
and the same argument above shows that
\begin{itemize}
    \item $|\cc_{A,B_1}|\ge   \frac{1+o(1)}{4\Delta_A^2}  \left( S_{A,2} + (1-\gamma)^2\Delta_B\Delta_A^2 \right)^2+o(\Delta_A^2\Delta_B^2)  , $
\item  $|\cc_{A_1,B_2}| \ge \frac{1+o(1)}{4\Delta_B^2} S_{B,2}^2+o(\Delta_A^2\Delta_B^2) .  $
\end{itemize}
This implies 
\begin{align*} 
|\cc(H)| \ge |\cc_{A,B_1}|+|\cc_{A_1,B_2}| \ge \frac{ 1+o(1) }{4} \left(\frac{S_{B,2}^2}{\Delta_B^2} + \frac{ \left( S_{A,2} + (1-\gamma)^2\Delta_B\Delta_A^2 \right)^2 }{\Delta_A^2} \right) +o(\Delta_A^2\Delta_B^2) 
\end{align*}

    Now the assertion of the lemma holds by averaging the above two lower bounds for $|\cc(H)|$.
    \end{proof}
It remains to bound $|\bb(H)|$. Recall that $\bb(H)$ consists of ordered paths $abcd$ with $ab\notin E_H(A_2,B_2)$ and $cd\in E_H(A_2,B_2)$.
\begin{lemma}\label{clm BH} 
\[
  |B(H)| \ge    \frac{\Delta_B}{\Delta_A}S_{A,3}+\frac{\Delta_A}{\Delta_B} S_{B,3}- 3\Delta_B S_{A,2} -3\Delta_A S_{B,2} + 4\gamma \Delta_A^2\Delta_B^2
          +o(\Delta_A^2\Delta_B^2)   \]
\end{lemma}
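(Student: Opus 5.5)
The plan is to count $|\bb(H)|$ exactly (up to lower-order terms) by splitting on the position of the middle vertices, and then to bound the single ``mixed'' term by AM--GM. Let $abcd\in\bb(H)$. Since $cd\in E_H(A_2,B_2)$, either $c=x\in A_2$ and $d\in B_2$, or $c=y\in B_2$ and $d\in A_2$. These two cases are symmetric under swapping the roles of $A$ and $B$, so I describe the first and then mirror it.

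\emph{Case $c=x\in A_2$, $d\in N(x)\cap B_2$.} There are $d_2(x)$ choices for $d$. Split on $b$.
\begin{itemize}
\item If $b\in N(x)\cap B_1$, then $ab\notin E_H(A_2,B_2)$ holds automatically. This gives $d_1(x)$ choices for $b$ and $\Delta_B-1$ choices for $a\neq x$. The contribution is $\Delta_B\sum_{x\in A_2}d_1(x)d_2(x)$ up to lower order.
\item If $b\in N(x)\cap B_2$ with $b\neq d$, then we need $a\in A_1$. This gives $d_1(b)$ choices for $a$, and we may keep this term or simply discard it as nonnegative.
\end{itemize}
Using $d_2(x)=\Delta_A-d_1(x)$ and \eqref{equ: Sx1}, the first bullet gives $\Delta_B(\Delta_A S_{A,1}-S_{A,2})=\gamma\Delta_A^2\Delta_B^2-\Delta_B S_{A,2}$. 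The mirrored case ($c\in B_2$, $b\in B_1$) similarly gives $\gamma\Delta_A^2\Delta_B^2-\Delta_A S_{B,2}$.

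\emph{Case $c=y\in B_2$, $d\in A_2$, $b\in N(y)\cap A_2$, $a\in B_1$.} I will count these by summing over $b=x\in A_2$ first. The vertex $x$ contributes $d_1(x)$ choices of $a$, and $\sum_{y\in N(x)\cap B_2}(d_2(y)-1)$ choices of the pair $(y,d)$. Write $d_2(y)=\Delta_B-d_1(y)$. The count becomes
\[
\Delta_B\sum_{x\in A_2}d_1(x)d_2(x)\;-\;M\;-\;O(\Delta_A^2\Delta_B),
\qquad M:=\sum_{xy\in E_H(A_2,B_2)}d_1(x)d_1(y).
\]
The first term again equals $\gamma\Delta_A^2\Delta_B^2-\Delta_B S_{A,2}$. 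The symmetric case ($c\in A_2$, $b\in B_2$, $a\in A_1$) reproduces exactly the second bullet above, so I use it in this form instead of discarding it. It yields $\gamma\Delta_A^2\Delta_B^2-\Delta_A S_{B,2}-M$ up to lower order.

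The main obstacle is the mixed term $M$, which couples the two sides. I will bound it above edge by edge with weighted AM--GM:
\[
d_1(x)d_1(y)\le \frac{\Delta_B}{2\Delta_A}d_1(x)^2+\frac{\Delta_A}{2\Delta_B}d_1(y)^2 .
\]
Summing over edges of $E_H(A_2,B_2)$, each $x$ appears $d_2(x)=\Delta_A-d_1(x)$ times, so $\sum_x d_2(x)d_1(x)^2=\Delta_A S_{A,2}-S_{A,3}$. Similarly, $\sum_y d_2(y)d_1(y)^2=\Delta_B S_{B,2}-S_{B,3}$. Hence
\[
2M\le \Delta_B S_{A,2}-\frac{\Delta_B}{\Delta_A}S_{A,3}+\Delta_A S_{B,2}-\frac{\Delta_A}{\Delta_B}S_{B,3}.
\]

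Adding the four contributions and subtracting $2M$ gives $4\gamma\Delta_A^2\Delta_B^2$. The coefficient of $S_{A,2}$ is $-3\Delta_B$: one $-\Delta_B$ from each of the two main terms and one from $2M$. Likewise the coefficient of $S_{B,2}$ is $-3\Delta_A$. The cubic terms come out as $+\frac{\Delta_B}{\Delta_A}S_{A,3}+\frac{\Delta_A}{\Delta_B}S_{B,3}$. The ``$-1$'' corrections (excluding $a=c$, $d=b$) contribute $O(\Delta_A^2\Delta_B+\Delta_A\Delta_B^2)=o(\Delta_A^2\Delta_B^2)$, which gives the stated bound.
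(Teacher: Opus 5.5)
Your proof is correct and is essentially the paper's argument: your four cases (split on the sides containing $c$ and $b$) are exactly the paper's decomposition of $\bb(H)$ into $\bb_1(H)$ and $\bb_2(H)$, and you control the mixed term $\sum_{xy\in E_H(A_2,B_2)}d_1(x)d_1(y)$ with the same weighted AM--GM inequality. One small typo: in the mirrored case with $c\in B_2$ you mean $b\in A_1$, not $b\in B_1$.
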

\begin{proof} 
 Let $\bb_1(H)=\big\{abcd\in \bb(H): bc\notin E_H(A_2,B_2)\big\}$ and $\bb_2(H)=\big\{abcd\in \bb(H): bc\in E_H(A_2,B_2)\big\}$.  Then $|\bb(H)|=|\bb_1(H)|+|\bb_2(H)|$. We will bound $|\bb_1(H)|$ and $|\bb_2(H)|$. 

For $|\bb_1(H)|$, we count the ordered paths $abcd \in \mathcal{B}_1(H)$ based on the location of the vertex $c$. Recall that $ab\notin E_H(A_2,B_2)$ and $cd \in E_H(A_2, B_2)$. When $c \in A_2$, we have $d\in B_2$, $b\in B_1$, and $a\in A$. Hence, for each choice of $c\in A_2$, there are $d_1(c)d_2(c)$ choices for the ordered path $bcd$, and for each choice of such $bcd$, there are $\Delta_B -1$ choices for $a$ to form the ordered path $abcd$. Thus, the number of ordered paths $abcd$ in $\bb_1(H)$ with $c\in A_2$ is 
\begin{align*}
    \sum_{c\in A_2}d_1(c)d_2(c)(\Delta_B -1)
    &= \sum_{c\in A_2}d_1(c)(\Delta_A-d_1(c))(\Delta_B -1)\\
    &= (\Delta_B-1)\Delta_A  \sum_{c\in A_2}d_1(c)-(\Delta_B-1) \sum_{c\in A_2}d_1(c)^2\\
    &\ge  \Delta_A (\Delta_B-1) S_{A,1}- \Delta_B S_{A,2} \\
    &=%\overset{\eqref{equ: Sx1}}{=} 
    \gamma\Delta_A^2 \Delta_B^2  - \Delta_B S_{A,2} +o(\Delta_A^2 \Delta_B^2) \quad (\mbox{by } \eqref{equ: Sx1}).
\end{align*} 
Similarly, $c \in B_2$ implies $d\in A_2$, $b\in A_1$, and $a\in B$, and  the number of ordered paths $abcd$ in $\bb_1(H)$ with $c\in B_2$ is at least
 $\gamma\Delta_A^2 \Delta_B^2- \Delta_A S_{B,2}+o(\Delta_A^2 \Delta_B^2)$. Hence, 
%By relabeling $c$ to $x$ (when $c\in A_2$) or $y$ (when $c\in B_2$),  we have 
\[
    |\bb_1(H)|\ge 2\gamma\Delta_A^2 \Delta_B^2- \Delta_B S_{A,2}- \Delta_A S_{B,2}+o(\Delta_A^2 \Delta_B^2).
\] 
 
Next, we bound $|\bb_2(H)|$. For an ordered path $abcd\in \bb_2(H)$, we have  $bc,cd\in E_H(A_2,B_2)$ and $ab\notin E_H(A_2,B_2)$.
If $b\in A_2$ and $c\in B_2$ then $a\in B_1$ and $d\in A_2$; so for each such choice of $bc$, the number of  ordered paths $abcd$  in $\bb_2(H)$ with $b\in A_2$ and $c\in B_2$ is $d_1(b)(d_2(c)-1)=d_1(b)(\Delta_B-d_1(c)-1)$. If $b\in B_2$ and $c\in A_2$ then $a\in A_1$ and $d\in B_2$; so for each such choice of $bc$, the number of paths $abcd$ in $\bb_2(H)$ with $b\in B_2$ and  $c\in A_2$ is $d_1(b)(d_2(c)-1)=d_1(b)(\Delta_A-d_1(c)-1)$.
Hence, 
\[|\bb_2(H)|=\sum_{bc\in E(H), b\in A_2, c\in B_2}d_1(b)(\Delta_B-d_1(c)-1)+ \sum_{bc\in E(H), b\in B_2,c\in A_2}d_1(b)(\Delta_A-d_1(c)-1).\]
%Summing over all edges $xy \in E_H(A_2, B_2)$, we obtain
By renaming $b$ to $x$ and $c$ to $y$ (when $b\in A_2$) or $b$ to $y$ and $c$ to $x$ (when $b\in B_2$), we have  
    \begin{align*}\label{eqB2}
        |\bb_2(H)|
        &= \sum_{xy\in E(H), x\in A_2, y\in B_2}\Big(-2d_1(x)d_1(y)+ (\Delta_B -1 )d_1(x)+ (\Delta_A -1)d_1(y) \Big).
    \end{align*}
    Note that 
    $$\sum_{xy\in E(H), x\in A_2, y\in B_2} d_1(x)= \sum_{x\in A_2} d_1(x)d_2(x)=\sum_{x\in A_2}d_1(x)\Big(\Delta_A-d_1(x)\Big);$$
  %  \overset{\eqref{equ: Sx1}}{=} \gamma \Delta_A^2\Delta_B-S_{A,2}, $$ 
    hence
    \begin{align*}
     & \quad \sum_{xy\in E(H), x\in A_2,y\in B_2}(\Delta_B-1)d_1(x)\\
     &=(\Delta_B-1)\Big(\Delta_A\sum_{x\in A_2}d_1(x)-\sum_{x\in A_2}d_1(x)^2\Big)\\
     & =
     (\Delta_B -1 )( \gamma \Delta_A^2\Delta_B-S_{A,2}) \quad (\mbox{by } \eqref{equ: Sx1})\\
     & \ge \gamma \Delta_A^2\Delta_B^2-\Delta_B S_{A,2} -\gamma \Delta_A^2\Delta_B\\ &=\gamma \Delta_A^2\Delta_B^2-\Delta_B S_{A,2}+o(\Delta_A^2\Delta_B^2). 
   \end{align*}  
    Similarly, 
    $$\sum_{xy\in E(H),x\in A_2,y\in B_2}(\Delta_A-1) d_1(y)\ge  \gamma \Delta_A^2\Delta_B^2-\Delta_A S_{B,2}+o(\Delta_A^2\Delta_B^2) .$$
    Further note that 
     \begin{align*}
    &\quad \sum_{xy \in E(H),x\in A_2,y\in B_2} 2d_1(x)d_1(y) \\
    &=\Delta_A\Delta_B\sum_{xy \in E(H),x\in A_2,y\in B_2} 2\cdot \frac{d_1(x)}{\Delta_A}\cdot \frac{d_1(y)}{\Delta_B} \\   
    &\le \Delta_A\Delta_B\sum_{xy \in E(H),x\in A_2,y\in B_2}  \left(\left(\frac{d_1(x)}{\Delta_A}\right)^2+\left(\frac{d_1(y)}{\Delta_B}\right)^2\right)\\
    &=\frac{\Delta_B}{\Delta_A}\sum_{x\in A_2} d_1(x)^2(\Delta_A-d_1(x))+\frac{\Delta_A}{\Delta_B}\sum_{y\in B_2} d_1(y)^2(\Delta_B-d_1(y))\\
    &=-\frac{\Delta_B}{\Delta_A}\sum_{x\in A_2} d_1(x)^3 -\frac{\Delta_A}{\Delta_B} \sum_{y\in B_2} d_1(y)^3 + \Delta_B \sum_{x\in A_2} d_1(x)^2 + \Delta_A \sum_{y\in B_2} d_1(y)^2\\
     &=-\frac{\Delta_B}{\Delta_A}S_{A,3} -\frac{\Delta_A}{\Delta_B} S_{B,3} + \Delta_B S_{A,2} + \Delta_A S_{B,2} .
\end{align*}
Therefore,
    \begin{align*}
        |\bb_2(H)|&\ge    \frac{\Delta_B}{\Delta_A}S_{A,3}+\frac{\Delta_A}{\Delta_B} S_{B,3}- 2\Delta_B S_{A,2} -2\Delta_A S_{B,2} + 2\gamma \Delta_A^2\Delta_B^2
        +o(\Delta_A^2\Delta_B^2)
    \end{align*}
 
Hence, by combining the above bounds for $|\bb_1(H)|$ and $|\bb_2(H)|$, we have 
 \[
  |\bb_1(H)|+|\bb_2(H)|\ge  \frac{\Delta_B}{\Delta_A}S_{A,3}+\frac{\Delta_A}{\Delta_B} S_{B,3}- 3\Delta_B S_{A,2} -3\Delta_A S_{B,2} + 4\gamma \Delta_A^2\Delta_B^2
       +o(\Delta_A^2 \Delta_B^2). \]
Now the assertion of the lemma follows, since $|\bb(H)|=|\bb_1(H)|+|\bb_2(H)|$. 
\end{proof}

\section{Bounding $|\pp(H)|-|\bb(H)|-4|\cc(H)|$}\label{section3}
In this section, we find an upper bound for $|\pp(H)|-|\bb(H)|-4|\cc(H)|$, which, combined with Lemma~\ref{lem me PBC}, provides a bound on $m_e$ that we will use to prove Theorem~\ref{thm-main}. 
First, we prove a technical lemma. 

\begin{lemma}\label{gamma-bound} 
Let $U(\gamma)=\frac{2 + 10\gamma - 4\gamma^3 + 2\gamma^4 - \gamma^5}{2(1+\gamma)}$. Then $U(\gamma) <2.348$ when $0\le \gamma \le 1$. 
\end{lemma}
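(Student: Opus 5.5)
The plan is to clear the denominator and reduce the claim to positivity of a single quintic on $[0,1]$. Since $1+\gamma>0$ for $0\le\gamma\le 1$, the inequality $U(\gamma)<2.348$ is equivalent to
\[
2+10\gamma-4\gamma^3+2\gamma^4-\gamma^5<4.696(1+\gamma),
\]
that is, to $p(\gamma)>0$, where
\[
p(\gamma):=2.696-5.304\gamma+4\gamma^3-2\gamma^4+\gamma^5 .
\]
A rough evaluation shows the minimum of $p$ on $[0,1]$ is only about $0.005$, near $\gamma\approx 0.776$. Crude termwise estimates will therefore not work, and the argument must locate the minimum precisely.

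First I show that $p$ is convex on $[0,1]$. We have
\[
p''(\gamma)=24\gamma-24\gamma^2+20\gamma^3=\gamma\,(24-24\gamma+20\gamma^2).
\]
The quadratic factor has discriminant $24^2-4\cdot 24\cdot 20<0$, so it is positive everywhere. Hence $p''\ge 0$ on $[0,1]$, and $p'$ is nondecreasing there.

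Next I bracket the critical point, using
\[
p'(\gamma)=-5.304+12\gamma^2-8\gamma^3+5\gamma^4 .
\]
Direct evaluation gives $p'(0.77)\approx -0.084<0$ and $p'(0.78)\approx 0.051>0$. Because $p'$ is nondecreasing, $p$ is decreasing on $[0,0.77]$ and increasing on $[0.78,1]$. So the minimum of $p$ over $[0,1]$ is attained in $[0.77,0.78]$.

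On that short interval I use convexity to get a linear lower bound. For $\gamma\in[0.77,0.78]$,
\[
p(\gamma)\ \ge\ p(0.77)+p'(0.77)(\gamma-0.77)\ \ge\ p(0.77)-0.084\cdot 0.01 .
\]
Evaluating $p(0.77)$ with $0.77^3\approx0.456533$, $0.77^4\approx0.35153$ and $0.77^5\approx0.270678$ gives $p(0.77)\approx 0.00567$. The lower bound is then about $0.0048>0$, so $p>0$ on all of $[0,1]$ and $U(\gamma)<2.348$.

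The main obstacle is numerical care, because the margin is tiny. The decimal evaluations of $p(0.77)$, $p'(0.77)$ and $p'(0.78)$ must be done exactly (as rationals) or with rigorous rounding in the safe direction. If needed, I will refine the bracket, for instance to $[0.775,0.777]$, to gain additional slack.
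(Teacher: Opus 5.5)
Your proof is correct, but it certifies the near-tight bound differently from the paper.

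The paper works with $U$ directly. It computes $U'(\gamma)=-(4\gamma^5-\gamma^4+12\gamma^2-8)/(2(1+\gamma)^2)$ and shows that the numerator polynomial $P$ is strictly increasing on $[0,1]$. So $U$ has a unique critical point $r^*$, which is its maximum. It then brackets $r^*\in(0.7764,0.7765)$ by the signs of $P$. Finally it bounds $U(r^*)$ by plain interval arithmetic: the upper endpoint goes into the positive monomials, the lower endpoint into the negative ones and the denominator. This gives about $2.3469$.

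You instead clear the denominator, which builds the constant $2.348$ into the quintic $p$. You then replace precise localization of the critical point by convexity of $p$ plus a supporting-line bound. That buys a lot of numerical slack:
\begin{itemize}
\item The paper's interval evaluation loses roughly the bracket width times the derivatives of the individual monomials, which are of order $10$. Since the true margin in $U$ is only about $0.0015$, it needs a bracket of width $10^{-4}$.
\item Your tangent-line bound loses only $|p'(0.77)|\cdot 0.01<0.001$, because $p'$ nearly vanishes near the minimizer. So a bracket of width $10^{-2}$ at round points suffices.
\end{itemize}
Your numbers check out exactly: $p(0.77)=0.0056695957$, $p'(0.77)=-0.08381195$ and $p'(0.78)=0.0511368$, so $p>0.0048$ on $[0.77,0.78]$.

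In return, the paper's route gives a near-sharp estimate of $\max_{[0,1]}U$ (the true value is about $2.3465$) that does not depend on the target constant. This shows how much room $2.348$ leaves.

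Your bracket surrounds the minimizer of $p$, not the maximizer $r^*$ of $U$. These points differ in general, though they are close here, and your argument correctly never needs them to coincide.
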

\begin{proof}
 We bound the global maximum of $U(\gamma)$ on the closed interval $[0, 1]$. Consider the first derivative
\[ U'(\gamma) = \frac{-(4\gamma^5 - \gamma^4 + 12\gamma^2 - 8)}{2(1+\gamma)^2}. \]
The critical points of $U(\gamma)$ correspond to the roots of the polynomial $P(\gamma) = 4\gamma^5 - \gamma^4 + 12\gamma^2 - 8$. Note that $P(0) = -8 <0$ and $P(1) = 4 - 1 + 12 - 8 = 7 > 0$. By the Intermediate Value Theorem,  $P(\gamma)$ has at least one root in the interval $(0, 1)$. Furthermore, the derivative $P'(\gamma) = 4\gamma(5\gamma^3 - \gamma^2 + 6)$ is strictly positive on $(0, 1)$, since $6 - \gamma^2 > 5$ when $\gamma\in (0,1)$. Consequently, $P(\gamma)$ is strictly increasing on the interval $[0, 1]$, guaranteeing that the root is unique. Evaluating $P(\gamma)$ at specific points yields:
\begin{equation*}
\begin{aligned}
P(0.7764) &=4(0.7764)^5 - (0.7764)^4 + 12(0.7764)^2 - 8 < -0.001 < 0, \\
P(0.7765) &= 4(0.7765)^5 - (0.7765)^4 + 12(0.7765)^2 - 8 > 0.001>0.
\end{aligned}
\end{equation*}
Thus, the unique real root $r^*$ lies in the interval $(0.7764, 0.7765)$. %{\color{blue}I got an estimate $r^*\approx 0.7764556$, in this case, the upper bound would be 2.346}. 
Because $P(\gamma)$ transitions from negative to positive at $r$, it follows that $U'(\gamma) > 0$ for $0\le \gamma < r$ and $U'(\gamma) < 0$ for $r<\gamma\le 1$. Therefore, $U(\gamma)$ achieves its absolute maximum on $[0, 1]$ at $r^*$.

We now estimate $U(r^*)$. Note \[ U(r) = \frac{(2 + 10r + 2r^4) - (4r^3 + r^5)}{2(1+r)}. \]
Because $r^* \in (0.7764, 0.7765)$ %{\color{blue}we can establish a strict upper bound for $U(r^*)$ by maximizing the positive terms in the numerator while minimizing both the subtracted terms and the denominator. This yields:} 
and $U(\gamma)$ is increasing, 
\[ U(r^*) < \frac{(2 + 10(0.7765) + 2(0.7765)^4) - (4(0.7764)^3 + (0.7764)^5)}{2(1+0.7764)}  < 2.348. \]
%Combining this with the boundary evaluations $U(0) = 1$ and $U(1) = 2.25$, 
It follows that $U(\gamma) \le 2.348$ for all $\gamma \in [0, 1]$.  
\end{proof}

We can now state and prove the following key lemma. Recall from Section 2 the definitions of  $\hh$ and $\pp(H), \cc(H),\bb(H)$ for $H\in \hh$.
\begin{lemma}\label{lem PBC upbd}
   Let $H\in \hh$ be a biregular bipartite graph with partite sets $A,B$, such that $A=A_1\cup A_2$ and $B=B_1\cup B_2$,  $|A_1|=\Delta_B$, $|B_1|=\Delta_A$, $A_1\cap  A_2=\emptyset$, $B_1\cap B_2=\emptyset$,  the vertices of $A$ all have degree $\Delta_A$, and the vertices of $B$ all have degree $\Delta_B$.   Then 
   $$|\mathcal{P}(H)| - |\mathcal{B}(H)| - 4|\mathcal{C}(H)|\le  2.348 \Delta_A^2\Delta_B^2 + o(\Delta_A^2\Delta_B^2)$$
\end{lemma}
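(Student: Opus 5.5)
The plan is to substitute the three bounds of Lemmas~\ref{p(H)}, \ref{clm C(H)} and \ref{clm BH} into $|\pp(H)|-|\bb(H)|-4|\cc(H)|$. This leaves a function of $\gamma$ and of the power sums $S_{A,2},S_{A,3},S_{B,2},S_{B,3}$. We then maximize over the power sums using the constraints they satisfy, and finally invoke Lemma~\ref{gamma-bound}.

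\textbf{Normalization.} Write $D=\Delta_A^2\Delta_B^2$ and set $z_x=d_1(x)/\Delta_A\in[0,1]$ for $x\in A_2$. Define
\[
s_A=\frac{S_{A,2}}{\Delta_A^2\Delta_B}=\frac{1}{\Delta_B}\sum_{x\in A_2} z_x^2,\qquad t_A=\frac{S_{A,3}}{\Delta_A^3\Delta_B}=\frac{1}{\Delta_B}\sum_{x\in A_2} z_x^3 .
\]
By \eqref{equ: Sx1} we have $\frac{1}{\Delta_B}\sum_{x\in A_2} z_x=\gamma$. Define $s_B,t_B$ symmetrically, using \eqref{equ: Sx2}. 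In these variables the relevant terms become
\[
\tfrac{\Delta_B}{\Delta_A}S_{A,3}=t_AD,\quad \Delta_BS_{A,2}=s_AD,\quad S_{A,2}^2/\Delta_A^2=s_A^2D,\quad (1-\gamma)^2\Delta_BS_{A,2}=(1-\gamma)^2s_AD,
\]
and symmetrically for $B$. Combining the three lemmas then gives
\[
\frac{|\pp(H)|-|\bb(H)|-4|\cc(H)|}{D}\le 2-2\gamma-(1-\gamma)^4+\sum_{X\in\{A,B\}}\Big(\big(3-(1-\gamma)^2\big)s_X-s_X^2-t_X\Big)+o(1).
\]
Here the $(1+o(1))$ factors from Lemma~\ref{clm C(H)} are absorbed into $o(1)$, since $s_X\le\gamma\le1$.

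\textbf{Main step: eliminating $t_X$.} Since $z_x\in[0,1]$, we have $s_X\le\gamma$. By Cauchy--Schwarz,
\[
\Big(\sum z_x^2\Big)^2\le \Big(\sum z_x\Big)\Big(\sum z_x^3\Big),
\]
which after normalization reads $t_X\ge s_X^2/\gamma$ (assuming $\gamma>0$). Each summand is therefore at most
\[
f(s)=\big(3-(1-\gamma)^2\big)s-\Big(1+\tfrac1\gamma\Big)s^2 .
\]
This concave quadratic is maximized at
\[
s^*=\frac{\gamma\,(2+2\gamma-\gamma^2)}{2(1+\gamma)} .
\]
The point $s^*$ lies in $[0,\gamma]$, because $2+2\gamma-\gamma^2\le 2+2\gamma$. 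The maximum value is
\[
f(s^*)=\frac{\gamma\,(2+2\gamma-\gamma^2)^2}{4(1+\gamma)} .
\]
Hence the normalized quantity is at most
\[
2-2\gamma-(1-\gamma)^4+\frac{\gamma(2+2\gamma-\gamma^2)^2}{2(1+\gamma)}+o(1).
\]

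\textbf{Conclusion.} Expanding over the common denominator $2(1+\gamma)$, the last expression should simplify to $U(\gamma)$ from Lemma~\ref{gamma-bound}. As a check, both sides equal $1$ at $\gamma=0$. Lemma~\ref{gamma-bound} then gives the bound $2.348D+o(D)$.

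The degenerate case $\gamma=0$ is handled separately. There $S_{A,1}=S_{B,1}=0$, so all the $s_X$ and $t_X$ vanish, and the bound reduces to $2-1=1$. Alternatively, one can argue by continuity, or simply use $t_X\ge0$ when $\gamma$ is tiny.

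\textbf{Expected obstacle.} The main obstacle is choosing the correct inequality linking $t_X$ and $s_X$. Using only $t_X\ge0$ or $s_X\le\gamma$ would be too lossy. The Cauchy--Schwarz bound $t_X\ge s_X^2/\gamma$ is what yields exactly the function $U(\gamma)$. After that, the remaining work is the algebraic verification that the optimized expression coincides with $U(\gamma)$.
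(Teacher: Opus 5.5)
Your proposal is correct and is essentially the paper's proof. You substitute Lemmas~\ref{p(H)}, \ref{clm C(H)} and \ref{clm BH}, eliminate the cubic sums via Cauchy--Schwarz in the form $S_{X,2}^2\le \gamma\Delta_A\Delta_B S_{X,3}$, and bound the resulting concave quadratic in $S_{X,2}$ (the paper completes the square, you maximize in normalized variables) before applying Lemma~\ref{gamma-bound}; the expansion you left as ``should simplify'' does check out, since $2(1+\gamma)\bigl(2-2\gamma-(1-\gamma)^4\bigr)+\gamma(2+2\gamma-\gamma^2)^2=2+10\gamma-4\gamma^3+2\gamma^4-\gamma^5$, giving exactly $U(\gamma)$.
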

\begin{proof}
By Lemma~\ref{p(H)}, $|\mathcal{P}(H)| \le (2+2\gamma)\Delta_A^2\Delta_B^2 + o(\Delta_A^2\Delta_B^2)$, where   $0 \le \gamma \le 1$ We will minimize the quantity $|\mathcal{B}(H)|+4|\mathcal{C}(H)|$, subject to the following degree constraints:
\[ \sum_{x\in A_2}d_1(x) = \sum_{y\in B_2}d_1(y) = \gamma \Delta_A\Delta_B.\]
By Lemmas \ref{clm C(H)} and \ref{clm BH}, we have
\begin{align*}
    4|\cc(H)|+|\bb(H)|
    \ge &\big(1+o(1)\big)\left(\frac{S_{A,2}^2}{\Delta_A^2} + \frac{S_{B,2}^2}{\Delta_B^2} + (1-\gamma)^2\Delta_B S_{A,2} + (1-\gamma)^2\Delta_A S_{B,2} + (1-\gamma)^4\Delta_A^2\Delta_B^2   \right)\\
    & +\big(1+o(1)\big)\left(  \frac{\Delta_B}{\Delta_A}S_{A,3}+\frac{\Delta_A}{\Delta_B} S_{B,3}- 3\Delta_B S_{A,2} -3\Delta_A S_{B,2} + 4\gamma \Delta_A^2\Delta_B^2
        \right) \\
    =&\big(1+o(1)\big)\left( F_A +  F_B +\big((1-\gamma)^4+4\gamma\big)\Delta_A^2\Delta_B^2    \right),
\end{align*} 
where
\begin{align*}
F_A&=\frac{\Delta_B}{\Delta_A}S_{A,3}+\frac{1}{\Delta_A^2}S_{A,2}^2+\big((1-\gamma)^2-3\big)\Delta_BS_{A,2},\\
F_B&=\frac{\Delta_A}{\Delta_B}S_{B,3}+\frac{1}{\Delta_B^2}S_{B,2}^2+\big( (1-\gamma)^2-3\big)\Delta_AS_{B,2}.
\end{align*}

If $\gamma=0$ then by \eqref{equ: Sx1} and \eqref{equ: Sx2}, $d_1(x)=0$ for all $x\in A_2\cup B_2$; so $S_{A,i}=A_{B,i}=0$ for $i\in [3]$ and, hence,
%all the degrees occurring in $S_{A,i}$ and $S_{B,i}$ are zero for $i\in [3]$. 
\[
|\mathcal P(H)|-|\mathcal B(H)|-4|\mathcal C(H)|
\le \Delta_A^2\Delta_B^2+o(\Delta_A^2\Delta_B^2),
\]
as desired. 

We may therefore assume that $\gamma>0$. %{\color{blue} Let $u_x = d_1(x)^{1/2}$ and $v_x = d_1(x)^{3/2}$ so that $S_{A,2} = \sum_{x \in A_2} u_x v_x$.} 
By the Cauchy-Schwarz inequality and \eqref{equ: Sx1}, we have $$S_{A,2}^2 \le %{\color{blue}\left( \sum_{x \in A_2} u_x^2 \right) \left( \sum_{x \in A_2} v_x^2 \right)} = 
S_{A,1}S_{A,3} = \gamma\Delta_A\Delta_BS_{A,3}.$$
Thus. 
\begin{align*}
F_A
&\ge \frac{\Delta_B}{\Delta_A}\cdot \frac{S_{A,2}^2}{\gamma\Delta_A\Delta_B}+\frac{1}{\Delta_A^2}S_{A,2}^2+\big( (1-\gamma)^2-3\big)\Delta_BS_{A,2}\\
& = \frac{1+\gamma}{\gamma} \frac{1}{\Delta_A^2} S_{A,2}^2 +(\gamma^2-2\gamma-2)\Delta_BS_{A,2}\\
& = \frac{1+\gamma}{\gamma} \frac{1}{\Delta_A^2}\left( S_{A,2} - \frac{ \gamma(2+2\gamma-\gamma^2) }{ 2(1+\gamma) } \Delta_A^2\Delta_B \right)^2  - \frac{ \gamma(2+2\gamma-\gamma^2)^2 }{ 4(1+\gamma) } \Delta_A^2\Delta_B^2\\
&\ge - \frac{ \gamma(2+2\gamma-\gamma^2)^2 }{ 4(1+\gamma) } \Delta_A^2\Delta_B^2.
\end{align*}
Applying the same argument to $F_B$, we can show that 
\begin{align*}
F_B  \ge - \frac{ \gamma(2+2\gamma-\gamma^2)^2 }{ 4(1+\gamma) } \Delta_A^2\Delta_B^2. 
\end{align*}
Hence
\begin{align*}
|\mathcal P(H)|-|\mathcal B(H)|-4|\mathcal C(H)|
&\le \big(1+o(1)\big)\left(2+2\gamma-(1-\gamma)^4-4\gamma
 +\frac{ \gamma(2+2\gamma-\gamma^2)^2 }{ 2(1+\gamma) }  \right)\Delta_A^2\Delta_B^2\\
&=\big(1+o(1)\big)U(\gamma)\Delta_A^2\Delta_B^2.
\end{align*}
By Lemma~\ref{gamma-bound}, $U(\gamma)<2.348$ for all $\gamma \in [0, 1]$. Therefore, the assertion of the lemma holds.
\end{proof}

\section{Proof of Theorem~\ref{thm-main}}\label{section4}

First, we state a result  from Hurley, de Joannis de Verclos and Kang \cite{HurleyJoannisKang2022ImprovedProcedure}. %Given $\sigma >0$ and positive integer $D$, a graph $G$ is said to be {\it  $\sigma D$-sparse} if for every $v\in V(G)$ the subgraph $G[N(v)]$ induced by the neighborhood $N(v)$ of $v$ has at most $(1-\sigma)\binom{D}{2}$ edges. We now state a slightly generalized result of Theorem 1.2 in~\cite{HurleyJoannisKang2022ImprovedProcedure}. 
%\begin{theorem} [Hurley, de Joannis de Verclos, and Kang] \label{thm:HVK} Define $\epsilon: =\epsilon(\sigma) = \sigma/2 -\sigma^{3/2}/6$. For each $\iota>0$ and $0 < \sigma \le 1$, there is $D_{1,2}: =D_{1,2}(\iota)$ such that the chromatic number satisfies $$\chi(G) \le (1 -\epsilon+\iota)D$$ for every $\sigma D$-sparse graph $G$ with $D \ge D_{1,2}$. 
%\end{theorem}
Given $\sigma >0$, a graph $G$ is said to be {\it  $\sigma$-sparse} if for every $v\in V(G)$ the subgraph induced by the neighborhood of $v$ has at most $(1-\sigma)\binom{\Delta(G)}{2}$ edges. 
%We now state a slightly generalized result of Theorem 1.2 in~\cite{HurleyJoannisKang2022ImprovedProcedure}. 

\begin{theorem} [Hurley, de Joannis de Verclos, and Kang] \label{thm:HVK} Define $\epsilon: =\epsilon(\sigma) = \sigma/2 -\sigma^{3/2}/6$. For each $\iota>0$ and $0 < \sigma \le 1$, there exists $\Delta_{0}: =\Delta_{0}(\iota)$ such that the chromatic number satisfies $$\chi(G) \le (1 -\epsilon+\iota)\Delta(G)$$ for every $\sigma$-sparse graph $G$ with $\Delta(G) \ge \Delta_{0}$. 
\end{theorem}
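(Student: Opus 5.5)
This theorem is quoted from \cite{HurleyJoannisKang2022ImprovedProcedure}, so I would not reprove it from scratch. I would derive it from their Theorem~1.2 by matching the hypotheses. Their result is stated for a parameter $D$, with \emph{$\sigma D$-sparse} meaning that every vertex has degree at most $D$ and every neighbourhood spans at most $(1-\sigma)\binom{D}{2}$ edges. Their conclusion is $\chi(G)\le(1-\epsilon+\iota)D$ once $D\ge D_0(\iota)$.

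\textbf{Reduction.} Given a $\sigma$-sparse graph $G$ in the sense defined above, set $D:=\Delta(G)$. Then every vertex has degree at most $D$, and every neighbourhood spans at most $(1-\sigma)\binom{D}{2}$ edges. So $G$ is $\sigma D$-sparse in their terminology, and the stated bound follows with $\Delta_0(\iota):=D_0(\iota)$. The only thing I would check is that their threshold $D_0$ can be chosen uniformly in $\sigma\in(0,1]$. If it cannot, I would let $\Delta_0$ depend on $\sigma$ as well. This is harmless in our application, because $\sigma$ will be a fixed constant coming from Lemma~\ref{lem PBC upbd}.

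\textbf{Sketch of the underlying argument.} For a self-contained account I would follow the Molloy--Reed ``naive colouring'' procedure.
\begin{enumerate}
\item Give each vertex, with a suitable activation probability, a uniformly random colour from a palette of size about $(1-\epsilon)\Delta$. Uncolour every vertex that conflicts with a neighbour, using equalizing coin flips.
\item Use sparsity: for each vertex $v$ there are at least $\sigma\binom{\Delta}{2}$ non-adjacent pairs in $N(v)$. Such a pair can receive the same colour and both keep it, so in expectation $v$'s neighbourhood ``saves'' about $\epsilon\Delta$ colours. That is, the number of colours still available to $v$ exceeds its number of uncoloured neighbours by roughly $\epsilon\Delta$.
\item Establish concentration of these quantities with Talagrand's inequality. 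Apply the Lov\'asz Local Lemma to fix an outcome in which every vertex has a surplus of $(\epsilon-\iota)\Delta$.
\item Complete the colouring greedily, or by a final list-colouring round.
\end{enumerate}

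\textbf{Main obstacle.} The hard step is obtaining the specific saving $\epsilon(\sigma)=\sigma/2-\sigma^{3/2}/6$ rather than the weaker constant from the classical analysis. This requires the refined procedure of Hurley, de Joannis de Verclos and Kang, which optimises the activation probability and accounts for repeated colours more carefully. I would simply invoke their computation for this step.
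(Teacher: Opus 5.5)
Your proposal is correct and matches the paper's approach: the paper gives no proof of this statement but quotes Theorem~1.2 of \cite{HurleyJoannisKang2022ImprovedProcedure}, and your choice $D:=\Delta(G)$ is exactly the translation of their $\sigma D$-sparseness into the $\sigma$-sparseness used here. Your fallback of letting $\Delta_0$ depend on $\sigma$ is harmless, since Section~\ref{section4} only uses the fixed value $\sigma=0.413$. One nit in your optional sketch: the bound of $\sigma\binom{\Delta}{2}$ non-adjacent pairs in $N(v)$ is guaranteed only when $d(v)=\Delta$, and lower-degree vertices get their slack partly from the degree deficit instead.
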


We can now complete the proof of Theorem~\ref{thm-main}. Let $G$ be a bipartite graph with partite sets $A$ and $B$, and let $\Delta_A=\max \{d_G(a): a\in A\}$ and $\Delta_B=\max\{d_G(b):b\in B\}$. We may assume that $G$ is biregular with degrees $\Delta_A$ and $\Delta_B$. Then 
$$\Delta(L(G)^2)=(\Delta_A-1)(\Delta_B-1)+(\Delta_A-1)(\Delta_B-1)=2\Delta_A\Delta_B-2(\Delta_A+\Delta_B)+2.$$  
By Lemmas~\ref{lem me PBC} and \ref{lem PBC upbd}, we have, for each $e\in E(G)$, 
\begin{equation*}\label{ine6}
    m_e\le \big(1.174+o(1)\big)\Delta_A^2\Delta_B^2=\big(0.587+o(1)\big){\Delta(L(G)^2)\choose 2}. 
\end{equation*}
Hence, for each $v\in V(L(G)^2)$, the neighborhood of $v$ in $L(G)^2$ has at most $(1-\sigma){\Delta(L(G)^2)\choose 2}$ edges, where $\sigma=0.413$, i.e., $L(G)^2$ is $\sigma$-sparse. Applying Theorem~\ref{thm:HVK} with  $\epsilon=0.162$ and  very small positive real number $\iota$, we obtain, for sufficiently large $\Delta_A$ and $\Delta_B$,   $\chi'_s(G)=\chi(L(G)^2) \le 2(1-\epsilon)\Delta_A\Delta_B=1.676\Delta_A\Delta_B$.
%Thus, letting $D =2\Delta_A\Delta_B$ and $\sigma=0.413$, we see that $L(G)^2$ is $\sigma D$-sparse graph. Applying Theorem~\ref{thm:HVK} with the corresponding values of $\epsilon=0.162$ as above with very small positive real number $\iota$, for sufficiently large $\Delta_A\Delta_B$,  we get $\chi'_s(G)=\chi(L(G)^2) \le 2(1-\epsilon)\Delta_A\Delta_B=1.676\Delta_A\Delta_B$. 

\section{Acknowledgment} We would like to thank Ross Kang for bringing reference \cite{davey2024local} to our attention and informing us of his forthcoming paper  (now available on arXiv~\cite{davey2026}) on strong edge coloring with Davey, de Joannis de Verclos, Hurley, and Volec. We would also like to thank Wouter Cames van Batenburg for pointing us to the concept of strong clique number of graphs and for insightful comments and suggestions which led to this improved version and in particular the current version of Lemma 2.3.

\newpage
 \bibliographystyle{amsabbrv}
\bibliography{ref}
\end{document}